\title{Split-harmonic maps and the 
interpolation problem for timelike minimal surfaces}
\author{ Sreedev Manikoth\thanks{BS-MS Programme, Indian Institute of Science, Education and Research, Pune, Maharashtra, India, Email id:Sreedev.m@students.iiserpune.ac.in} }
\newtheorem{theorem}{Theorem}[section]
\newtheorem{lem}[theorem]{Lemma}
\theoremstyle{definition}
\newtheorem{definition}{Definition}[section]
\theoremstyle{remark}
\begin{document}

\maketitle

\begin{abstract}
 The singular Bj\" orling problem and its solution for timelike minimal surfaces is a well-known result in minimal surface theory. In this article, we give a different proof of this theorem using split-harmonic maps. This is motivated by a similar solution of the singular Bj\"orling problem for maximal surfaces using harmonic maps. As an application, we study the problem of interpolating a given split-Fourier curve to a point by a timelike minimal surface. This is inspired by an analogous result for maximal surfaces. We also solve the problem of interpolating a given split-Fourier curve to another specified split-Fourier curve by a timelike minimal surface.
    
\end{abstract}

\section{Introduction}
\footnote{Mathematics Subject Classification. Primary 53A10, Key words and phrases. timelike minimal surfaces, split-holomorphic maps.}
In general zero-mean curvature surfaces are called minimal surfaces. The theory of minimal surfaces in Euclidean-3 space is rich with several beautiful theorems. One of the famous results in this subject is the Bj\"orling problem and its solution.

In Lorentz-Minkowski space, $\mathbb{L}^3$ zero-mean curvature surfaces can be spacelike or timelike. Spacelike zero mean curvature surfaces are called maximal surfaces, while timelike zero mean curvature surfaces are referred to as timelike minimal surfaces. Like minimal surfaces in Euclidean-3 space, one can use complex analysis to study maximal surfaces. But for timelike minimal surfaces, we have to use split-complex analysis.

Singular Bj\"orling problem for timelike minimal surfaces asks: given a lightlike curve $\gamma: (a,b) \rightarrow \mathbb{L}^3$ and a lightlike vector field $L :(a,b) \rightarrow \mathbb{L}^3 $ can we find a timelike minimal surface $X$ such that $X(u,0)=\gamma(u)$ and $X_v(u,0)=L(u)$. In \cite{Yw}, Y.W Kim, S.E Koh, and S-E Yang solve this using results involving the wave equation. We also note that in \cite{Rpr}, R.Dey, P.Kumar and R.K.Singh solves singular Bj\"orling problem for maximal surfaces when $\gamma: \mathbb{S}^1 \rightarrow \mathbb{L}^3$, $L: \mathbb{S}^1 \rightarrow \mathbb{L}^3$ are lightlike using a representation formula of maximal surfaces involving harmonic maps. This motivates us to solve the singular Bj\"orling problem for timelike minimal surfaces using a representation formula involving split-harmonic maps. We define split-harmonic maps as follows,

\begin{definition}
A map $f$: $\Omega \subset $ $\mathbb{C}^{'}$ $\rightarrow$ $\mathbb{R}$ is said to be split-harmonic if, $$f_{xx}-f_{yy}=0.$$

A map  $F$: $\Omega \subset $ $\mathbb{C}^{'}$ $\rightarrow$ $\mathbb{C}^{'}$ is said to be split-harmonic if each of its component functions is split-harmonic.
\end{definition}

Throughout this article, we will be using the following definition for $\mathbb{L}^3$.

\begin{definition}
$\mathbb{L}^3$ is $\mathbb{R}^3$ with the metric $ds^2=-dx^2+dy^2+dz^2$.
\end{definition}

We define split-Fourier curves.

We  note that for split-exponential map, $e^{k'\theta}=\cosh{\theta}+k^{'}\sinh{\theta}$. We refer to \cite{Fa}, page 11 for more details.

\begin{definition}
A curve $\gamma: \mathbb{H}^1 \rightarrow \mathbb{L}^3$ is said to be a split-Fourier curve if it has a finite series expansion of the form(I.e, only finitely many terms in the infinite series being nonzero), $\gamma(\theta)=(\gamma_1+k^{'}\gamma_2,\gamma_3)=(\Sigma_{-\infty}^{\infty} c_n e^{k^{'}n\theta}, \Sigma_{-\infty}^{\infty} d_n e^{k^{'}n\theta})$ with $c_n$, $d_n$ being split-complex numbers and  $\gamma_3(\theta)=\Sigma_{-\infty}^{\infty} d_n e^{k^{'}n\theta}$ being a real valued function.
\end{definition}

For example $$\gamma(\theta)= (\cosh \theta, \sinh \theta, \sinh \theta)= (\cosh \theta+ k^{'}\sinh \theta, \sinh \theta)=  (e^{k^{'}\theta}, \sinh \theta)= (e^{k^{'}\theta}, \frac{e^{k^{'}\theta}- e^{-k^{'}\theta}}{2 k^{'}})$$ is a split-Fourier curve.

In this paper, we solve singular Bj\"orling problem for timelike minimal surfaces when  $\gamma: \mathbb{H}^1 \rightarrow \mathbb{L}^3$, $L: \mathbb{H}^1 \rightarrow \mathbb{L}^3$ are lightlike. Here $\mathbb{H}^1$ denotes the set $\{x+k^{'}y \in \mathbb{C}^{'}|x>0,x^2-y^2=1\}$. $\mathbb{H}^1$ is the right branch of the unit hyperbola $x^2-y^2=1$. 

We first develop a representation formula for timelike minimal surfaces involving split-harmonic maps. Then we use this to solve the singular Bj\"orling problem. In pursuit of this, we show some results in split-complex analysis which are analogs of corresponding ones in complex analysis. As an application, we study the problem of interpolating a given spacelike or timelike split-Fourier curve to a point $p$  by a timelike minimal surface. This is analogous to a result in \cite{Rpr}.

Next, we take two arbitrary split-Fourier curves and find conditions such that there is a timelike minimal surface interpolating them.

\section{A representation formula for timelike minimal surfaces}

We recall the definition of generalized timelike minimal surfaces. Let $z=x+k^{'}y \in \Omega \subseteq \mathbb{C}^{'}$ . Here $k^{{'}^{2}}=1$ and $|z|^2=y^2-x^2$. We refer to \cite{Rb} for more about split-complex numbers. We recall the definitions of $\frac{\partial F}{\partial z}$ and $\frac{\partial F}{\partial \overline{z}}$ from \cite{Rb}.

$$\frac{\partial F}{\partial z}=\frac{1}{2}(\frac{\partial F}{\partial x}+k^{'}\frac{\partial F}{\partial y})$$ and

$$\frac{\partial F}{\partial \overline{z}}=\frac{1}{2}(\frac{\partial F}{\partial x}-k^{'}\frac{\partial F}{\partial y}).$$

The following definition is motivated by results in \cite{Rb}

\begin{definition}
A smooth map $F=(u,v,\omega):\Omega \rightarrow \mathbb{L}^3$ is said to be a generalized timelike minimal surface if it satisfies,
$$F_{xx}-F_{yy}=0,$$
$$\langle F_{x},F_{x} \rangle + \langle F_{y},F_{y} \rangle=0,$$ 
 $$\langle F_{x},F_{y} \rangle=0$$
and $$-\left|\frac{\partial u}{\partial z}\right|^2+\left|\frac{\partial v}{\partial z}\right|^2+\left|\frac{\partial \omega}{\partial z}\right|^2 \not\equiv 0 \hspace{0.1cm}on \hspace{0.1cm}\Omega.$$  
\end{definition}
We identify $\mathbb{L}^3$  with $\mathbb{C}^{'} \times \mathbb{R}$. Let,

$$h=u+k^{'}v.$$

Also, we define
$$\phi_1=\frac{\partial u}{\partial z}, \phi_2=\frac{\partial v}{\partial z} \hspace{0.1cm} and \hspace{0.1cm}
\phi_3=\frac{\partial \omega}{\partial z}.$$

Now we present the main result of this section.

\begin{theorem}
A smooth map $F=(h,w):\Omega \rightarrow \mathbb{L}^3$ is  a  generalized timelike minimal surface if and only if it is split-harmonic with $\omega_z^2= h_z \overline{h_{\overline{z}}} $ and $|h_z|$ not identically same as $|h_{\overline{z}}|$.
\end{theorem}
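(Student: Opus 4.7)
The proof will proceed by matching the four defining conditions of a generalized timelike minimal surface with the three conditions on the right-hand side, using direct computation in split-complex coordinates. The wave equation $F_{xx} - F_{yy} = 0$ corresponds tautologically to split-harmonicity of $F = (h, \omega)$, since $F = (u, v, \omega) = (h, \omega)$ and split-harmonicity is defined componentwise. So only the algebraic conditions need actual work.

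First, to identify the two conformality conditions $\langle F_x, F_x\rangle + \langle F_y, F_y\rangle = 0$ and $\langle F_x, F_y\rangle = 0$ with the single split-complex equation $\omega_z^2 = h_z\overline{h_{\bar z}}$, I would compute $h_z \overline{h_{\bar z}}$ in terms of the $\phi_j$. Writing $h_z = \phi_1 + k'\phi_2$ (from $h = u + k'v$) and using that $u, v$ are real to get $\overline{h_{\bar z}} = \phi_1 - k'\phi_2$, the product collapses via $(k')^2 = 1$ to $\phi_1^2 - \phi_2^2$. Thus $\omega_z^2 = h_z\overline{h_{\bar z}}$ is equivalent to the Lorentz-type conformality $-\phi_1^2 + \phi_2^2 + \phi_3^2 = 0$. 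Splitting this split-complex identity into real and $k'$-parts (each $\phi_j^2$ has real part $\tfrac{1}{4}(x_{j,x}^2 + x_{j,y}^2)$ and $k'$-part $\tfrac{1}{2}x_{j,x}x_{j,y}$) reproduces exactly the two conformality conditions.

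For the non-degeneracy condition, I would compute $-|\phi_1|^2 + |\phi_2|^2 + |\phi_3|^2$ using the text's convention $|a + k'b|^2 = b^2 - a^2$: this equals $\tfrac{1}{4}(\langle F_y, F_y\rangle - \langle F_x, F_x\rangle)$, which under the conformality relation just established reduces to $-\tfrac{1}{2}\langle F_x, F_x\rangle$. So condition (4) is equivalent to $\langle F_x, F_x\rangle \not\equiv 0$. The remaining task is to link this to $|h_z| \not\equiv |h_{\bar z}|$. For this I would multiply the identity $\omega_z^2 = h_z \overline{h_{\bar z}}$ by its split-complex conjugate $\omega_{\bar z}^2 = \overline{h_z}\, h_{\bar z}$ to obtain $(h_z\overline{h_z})(h_{\bar z}\overline{h_{\bar z}}) = (\omega_z\omega_{\bar z})^2$, i.e. $|h_z|^2 |h_{\bar z}|^2 = \tfrac{1}{16}(\omega_x^2 - \omega_y^2)^2$. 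This ties $|h_z|$, $|h_{\bar z}|$, and $\omega$ together.

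The step I expect to be the main obstacle is closing this last equivalence cleanly. Since $|h_z|^2$ and $|h_{\bar z}|^2$ need not be non-negative in the convention used here, the statement ``$|h_z|$ not identically the same as $|h_{\bar z}|$'' must be interpreted carefully, and one direction of the equivalence with $\langle F_x, F_x\rangle \not\equiv 0$ will likely require a continuity or branch-choice argument to handle the square-root ambiguity arising from the product identity above and to rule out degenerate sign-matchings between $\omega_z\omega_{\bar z}$ and the corresponding combinations of $h_z$, $h_{\bar z}$. Everything else reduces to bookkeeping of split-complex Wirtinger derivatives.
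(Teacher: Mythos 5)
Your proposal follows essentially the same route as the paper: identify $h_z=\phi_1+k'\phi_2$ and $\overline{h_{\bar z}}=\phi_1-k'\phi_2$, observe that $\omega_z^2-h_z\overline{h_{\bar z}}=-\phi_1^2+\phi_2^2+\phi_3^2$ packages the two conformality conditions as the real and $k'$-parts of $\tfrac14\bigl(\langle F_x,F_x\rangle+\langle F_y,F_y\rangle+2k'\langle F_x,F_y\rangle\bigr)$, and then match the non-degeneracy condition $-|\phi_1|^2+|\phi_2|^2+|\phi_3|^2\not\equiv 0$ with $|h_z|\not\equiv|h_{\bar z}|$. Your detour through $\langle F_x,F_x\rangle\not\equiv 0$ is a harmless reorganization of the paper's asserted identity $-|\phi_1|^2+|\phi_2|^2+|\phi_3|^2=-\tfrac12\bigl(|h_z|-|h_{\bar z}|\bigr)^2$, and the sign/branch issue you flag in the last step is precisely the point the paper itself only settles with the phrase ``one can show by calculating''.
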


\begin{proof}

We prove in the forward direction. 

Since $$F_{xx}-F_{yy}=0, $$ $F$ is split-harmonic.
By computations we get,
$$4(-\phi_1^{2}+\phi_2^{2}+\phi_3^{2})=\langle F_x, F_x \rangle + \langle F_y, F_y \rangle + 2 k^{'} \langle F_x, F_y \rangle,$$ 

$$h_z=\phi_1+k^{'} \phi_2$$ and $$\overline{h_{\overline{z}}} =\phi_1-k^{'} \phi_2.$$ 

This implies,
$$4(\omega_z^2-h_z \overline{h_{\overline{z}}})= \langle F_x, F_x \rangle + \langle F_y, F_y \rangle + 2 k^{'} \langle F_x, F_y \rangle.$$

Since $F$ is represented by conformal parameters, $$\omega_z^2=h_z \overline{h_{\overline{z}}}.$$

Using this, one can show by calculating that $$-\left|\frac{\partial u}{\partial z}\right|^2+\left|\frac{\partial v}{\partial z}\right|^2+\left|\frac{\partial \omega}{\partial z}\right|^2= \frac{-(|h_z|-|\overline{h_{\overline{z}}}|)^2}{2}.$$

From here we get \hspace{0.001cm} $|h_z|$ not identically same as $|h_{\overline{z}}|$. Proof of the other direction is similar.
\end{proof}

In particular above representation formula shows that for time-like minimal graphs over $x-y$ plane, $h$ is injective. 

\section{On split-harmonic maps}

In this section, we state some results about split-harmonic maps. We start with the definition of split-holomorphic and split-analytic maps. We refer to \cite{Rb} for more details.

\begin{definition}[split-holomorphic]
A map $f=u+iv:\Omega \rightarrow \mathbb{C}^{'}$ is said to be split-holomorphic if for any $z=x+k^{'}y \in \Omega$, $u$ and $v$ satisfies,
$$u_x=v_y$$ 
$$u_y=v_x.$$
We call the above equations as Cauchy-Riemann equations in split-complex analysis.
\end{definition}

\begin{definition}[split-analytic]
A map $f=u+iv:\Omega \rightarrow \mathbb{C}^{'}$ is said to be split-analytic, if for any $\eta=s+k^{'}t \in \Omega$ \hspace{0.01cm} there is an open ball, $$B_R(\eta)=\{x+k^{'}y \in \Omega| \sqrt{(s-x)^2+(t-y)^2}<R\}$$
In usual subspace topology of $\Omega$ in $\mathbb{R}^2$ with $f(z)=\Sigma_{0}^{\infty} c_n (z-\eta)^n$, for every $z \in B_R(\eta).$ 
\end{definition}

Unlike holomorphic maps, not all split-holomorphic maps are analytic. One can look at \cite{Fa} page 18 for a counterexample. So, we have to impose this extra condition while stating the identity theorem.
\begin{lem}[Principle of isolated zeros for split-analytic maps]

Suppose $f$ is a split-holomorphic map that is represented by a power series around a zero $\eta$ in the open ball $B_R(\eta)$. Also, assume f is not identically zero on $B_R(\eta)$. Then there is a  $0<r \leq R$ such that $f(z)\neq 0$ whenever $z$ is in $B_r(\eta)-\{ \eta \}$.
\end{lem}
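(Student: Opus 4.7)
The plan is to transplant the classical complex-analytic argument. Since $\eta$ is a zero of $f$, the power series in $B_R(\eta)$ satisfies $c_0 = 0$; because $f$ is not identically zero on $B_R(\eta)$, I would first verify that not every coefficient vanishes (arguing by uniqueness of the series representation, obtained by termwise differentiation of the split-holomorphic series). Letting $N \geq 1$ be the smallest index with $c_N \neq 0$, I would factor
\[
f(z) = (z-\eta)^{N}\, g(z), \qquad g(z) = \sum_{n=0}^{\infty} c_{N+n}(z-\eta)^{n}.
\]
The tail $g$ has the same radius of convergence as $f$, so it defines a continuous $\mathbb{C}^{'}$-valued function on $B_R(\eta)$ with $g(\eta) = c_N \neq 0$.

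Next I would show by a short split-algebra computation that $(z-\eta)^{N} = 0$ forces $z = \eta$: writing $z-\eta = x' + k^{'}y'$ and expanding, the real part and the $k^{'}$ part of $(x' + k^{'}y')^{N}$ vanish simultaneously only when $x' = y' = 0$. Continuity of $g$ in the Euclidean topology on $\mathbb{R}^{2}$ then gives some $r \in (0, R]$ on which $g(z)$ stays within a small Euclidean neighborhood of $c_N$. Taken together, the two factors in the displayed factorization would be individually nonzero throughout $B_r(\eta) - \{\eta\}$.

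The main obstacle, and the place where the argument departs from the classical case, is the step that converts two nonzero factors into a nonzero product: the ring $\mathbb{C}^{'}$ has genuine zero divisors, since nonzero elements on the conjugate null lines $\{a(1+k^{'})\}$ and $\{a(1-k^{'})\}$ multiply to zero. So a priori, $(z-\eta)^{N}\cdot g(z)$ could still vanish on the punctured ball. I would close this gap by using the additional information that $c_N$ is not a zero divisor (a hypothesis I would expect to need, and which should be extracted from the assumption that $f$ is not identically zero on $B_R(\eta)$ together with the Cauchy--Riemann equations), and then shrinking $r$ so that $g$ remains bounded away from the null cone $\{a + k^{'}b : a^{2} = b^{2}\}$. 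Off that cone, split-complex numbers are units, so the product is nonzero and the proof concludes. The bookkeeping around zero divisors, rather than the algebraic factoring itself, is where the genuinely split-complex content of the lemma lives.
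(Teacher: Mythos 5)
Your factorization $f(z) = (z-\eta)^N g(z)$ with $g(\eta) = c_N \neq 0$ is exactly the decomposition the paper uses; the paper then concludes directly that $(z-\eta)^N \neq 0$ together with $g(z) \neq 0$ near $\eta$ forces $f(z) \neq 0$ on a punctured ball. You are right to balk at that step: because $\mathbb{C}^{'}$ has zero divisors, a product of two nonzero split-complex numbers can vanish, and the paper's proof passes over this silently. You have located a genuine gap in the published argument.

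The patch you propose, however, does not close it. You hope to extract the hypothesis ``$c_N$ is not a zero divisor'' from the standing assumption that $f \not\equiv 0$ on $B_R(\eta)$, but this cannot be done, and the lemma as stated is in fact false. Take $\eta = 0$ and $f(z) = (1-k^{'})z$. This is a split-holomorphic polynomial (its real and split-imaginary parts are $x-y$ and $y-x$, which satisfy $u_x=v_y$, $u_y=v_x$), it is analytic around $0$ with $c_1 = 1-k^{'} \neq 0$, and it is not identically zero since $f(1) = 1-k^{'} \neq 0$. Yet $f\bigl(t(1+k^{'})\bigr) = t(1-k^{'})(1+k^{'}) = t(1-k^{'2}) = 0$ for every real $t$, so $f$ vanishes along the entire null line $\{t(1+k^{'}) : t\in\mathbb{R}\}$ through the origin, and every punctured Euclidean ball $B_r(0)\setminus\{0\}$ contains zeros of $f$. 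No shrinking of $r$ rescues the conclusion. Your instinct that the missing ingredient is invertibility of $c_N$ (equivalently, $c_N$ off the null cone) is correct in the sense that, added as an explicit hypothesis, the argument does go through: by continuity $g$ stays near $c_N$ and hence invertible on a small ball, $(z-\eta)^N\neq 0$ for $z\neq\eta$, and a nonzero element times a unit is nonzero. But that hypothesis is not a consequence of the stated ones, and the lemma would need to be restated to include it in order to be true.
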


\begin{proof}
There exist coefficients $c_k$ such that $f(z)= \Sigma_{0}^{\infty} c_k (z-\eta)^k$ in $B_R(\eta)$. Let $n \in \mathbb{N}$ be the smallest number such that $c_n \neq 0$. Since $\eta$ is a zero of $f$, $n \geq 1$. Then we have $f(z)=(z-\eta)^n \Sigma_{n+1}^{\infty} c_k (z-\eta)^k= (z-\eta)^n g(z)$. Here $g(\eta) \neq 0$. Now using continuity of $g$ we note that there is a $0<r \leq R $ such that $g(z)\neq 0$ whenever $z$ is in $B_r(\eta)-\{ \eta \}$. Since $f(z)=(z-\eta)^n g(z)$, this concludes the proof.
\end{proof}

Using this we prove the identity theorem. In the following theorem, we are using the definition of accumulation points following the usual topology of $\mathbb{R}^2.$

\begin{theorem}[Identity theorem for split-analytic maps]
Let $\Omega$ $\subseteq$ $\mathbb{C^{'}}$ be a open and connected domain. Also, let $f$ and $g$ be two split-holomorphic maps that are analytic on $\Omega$. If the set $E=\{z \in \Omega|   f(z)=g(z)\}$ contains an accumulation point then $f=g$ on $\Omega$.
\end{theorem}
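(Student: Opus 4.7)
The plan is to imitate the standard complex-analytic proof of the identity theorem, using connectedness together with the Principle of Isolated Zeros that was just established.

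First I would reduce to a single function by setting $h=f-g$. Then $h$ is split-holomorphic and analytic on $\Omega$ (both properties pass to differences), and the hypothesis becomes that the zero set $Z(h)=\{z\in\Omega : h(z)=0\}$ has an accumulation point in $\Omega$. The goal then reads $h\equiv 0$ on $\Omega$.

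Define $A=\{z\in\Omega : h\text{ vanishes identically on some neighborhood of }z\}$. I would show $A$ is open, nonempty, and closed in $\Omega$; connectedness of $\Omega$ then forces $A=\Omega$ and hence $f=g$. Openness is immediate from the definition. For nonemptiness, take the given accumulation point $z_0$ of $Z(h)$. Continuity of $h$ gives $h(z_0)=0$. Since $h$ is analytic at $z_0$, there is a ball $B_R(z_0)\subseteq\Omega$ on which $h(z)=\sum_{k\geq 0} c_k(z-z_0)^k$. Because $z_0$ is an accumulation point of $Z(h)$, the Principle of Isolated Zeros rules out the case that $h$ is not identically zero on $B_R(z_0)$, so $h\equiv 0$ on $B_R(z_0)$, which puts $z_0\in A$.

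The main content is closedness. Suppose $(z_n)\subset A$ with $z_n\to z\in\Omega$. Each $z_n$ has a whole neighborhood on which $h$ vanishes, so in particular $z_n\in Z(h)$, and therefore $z$ is itself an accumulation point of $Z(h)$. Running the same argument as for nonemptiness, now at the point $z$, produces a ball around $z$ on which $h$ vanishes, giving $z\in A$.

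I do not anticipate any serious obstacle beyond checking that the Principle of Isolated Zeros applies at every point of $\Omega$; this is exactly what the hypothesis of split-analyticity on all of $\Omega$ provides, namely a power-series expansion on a suitable ball about each point. The split-complex balls $B_R(\eta)$ used in the lemma are Euclidean balls in $\mathbb{R}^2$, so the notions of accumulation point, convergence, openness, and connectedness in my argument agree with the topology used in the lemma, and no further translation is needed.
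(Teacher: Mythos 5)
Your proposal is correct, and while it rests on the same two pillars as the paper — reducing to $h=f-g$ and invoking Lemma 3.1 (Principle of Isolated Zeros) — it exploits connectedness differently. You use the classical clopen argument: the set $A$ of points near which $h$ vanishes identically is open by definition, nonempty because the given accumulation point of $E$ forces $h\equiv 0$ on a ball around it, and closed because a limit of points of $A$ is again a non-isolated zero of $h$ to which Lemma 3.1 applies; connectedness of $\Omega$ then gives $A=\Omega$. The paper instead uses path-connectedness: join the accumulation point $\eta$ to an arbitrary $a\in\Omega$ by a path $\gamma$, set $t_0=\sup\{t\in[0,1]: h(\gamma(s))=0\ \forall s\in[0,t]\}$, and argue by contradiction that $t_0=1$. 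Both are standard; your clopen version is slightly cleaner, as it sidesteps the mild bookkeeping in the path argument (e.g.\ checking that $\gamma(t_0)$ really is a non-isolated zero when $\gamma$ could be locally constant). One small point worth spelling out in your closedness step: given $z_n\to z$ with $z_n\in A$, if some $z_n=z$ then $z\in A$ immediately; otherwise the $z_n$ are eventually distinct from $z$, and since each $z_n$ carries an entire neighborhood of zeros, $z$ is a non-isolated zero of $h$, so Lemma 3.1 and analyticity at $z$ give $z\in A$.
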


\begin{proof}
Consider $h=f-g$. Let $\eta \in E$ be an accumulation point. This implies that for any $r>0$, $B_r(\eta)$ contains a point $z \neq \eta$ with $h(z)=0$. Then by lemma 3.1, there is an open ball $B_R(\eta) \subseteq \Omega$  where $h$ is identically zero. Suppose $a\in \Omega \setminus B_R(\eta) $. Since $\Omega$ is path connected, there is a path $\gamma$ with $\gamma(0)=\eta$,$\gamma(1)=a$. Let $t_0=sup\{t \in [0,1] | h(\gamma(s))=0 \forall s \in [0,t]\}$. Note that such a $t_0$ exists, as this set is non-empty and bounded. Due to continuity of $h$, we have $h(\gamma(t_0))=0.$ This imply $\gamma(t_0)$ is a non-isolated zero of $h$. By lemma 3.1, this implies $h$ must be identically zero in a neighborhood of $\gamma(t_0)$. So unless $t_0=1,$ we can always find a $\delta>0$ such that $h(\gamma(t_0+s))=0$ for any $0 <s \leq \delta .$ This contradicts the definition of $t_0$. Thus $t_0$ must be 1, implying $h(a)=0$.
\end{proof}

Now we define the hyperbolic annulus.

\begin{definition}
A hyperbolic annulus is a region of the form $D=\{x+k^{'}y| x>0, a<x^2-y^2<b\}$ in the split-complex plane where a and b are two real numbers.
\end{definition}

We note that, unlike circular annulus, the hyperbolic annulus is simply connected. Now we define split-harmonic conjugate maps.

\begin{definition}
For a split-harmonic map $u: \Omega \subseteq \mathbb{C}^{'} \rightarrow \mathbb{R}$, a map  $v: \Omega \subseteq \mathbb{C}^{'} \rightarrow \mathbb{R}$ is said to be a split-harmonic conjugate of $u$ if $v$ satisfies,
$$u_x=v_y$$  $$u_y=v_x.$$
\end{definition}

We prove that on a simply connected domain, any split-harmonic map has a split-harmonic conjugate.
\begin{theorem}
Any split-harmonic map $u$ on a simply connected domain $\Omega$ has a split-harmonic conjugate $v$.
\end{theorem}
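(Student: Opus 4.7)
The plan is to mimic the classical proof that a harmonic function on a simply connected planar domain admits a harmonic conjugate, replacing the Cauchy--Riemann pair by the split Cauchy--Riemann pair $u_x = v_y,\ u_y = v_x$. The target equations tell me exactly what the partial derivatives of the unknown $v$ must be, so I will try to recover $v$ as an antiderivative of the 1-form
\[
\omega \;=\; u_y\, dx \;+\; u_x\, dy
\]
on $\Omega$.

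First I would check that $\omega$ is closed. Using the equality of mixed partials I need $\partial_y(u_y)=\partial_x(u_x)$, i.e.\ $u_{yy}=u_{xx}$, which is precisely the split-harmonicity hypothesis $u_{xx}-u_{yy}=0$. Next, since $\Omega$ is simply connected, I would invoke the Poincar\'e lemma (equivalently, path-independence of the line integral of a closed smooth 1-form on a simply connected planar domain) to conclude that $\omega$ is exact: fixing a base point $z_0 \in \Omega$, the formula
\[
v(z) \;=\; \int_{z_0}^{z} u_y\, dx + u_x\, dy,
\]
with the integral taken along any piecewise smooth path in $\Omega$ from $z_0$ to $z$, is well defined and smooth.

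Finally I would differentiate under the integral (or read off from $dv = \omega$) to obtain $v_x = u_y$ and $v_y = u_x$, so that $v$ is indeed a split-harmonic conjugate of $u$ in the sense of the preceding definition. As a by-product, $v$ is itself split-harmonic, since $v_{xx}-v_{yy}=(u_y)_x-(u_x)_y=0$.

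The only non-routine point is the use of simple connectedness to pass from closedness to exactness of $\omega$; everything else is a direct verification. I do not expect any genuine obstacle, since the argument is essentially the Euclidean harmonic-conjugate construction with the sign in the Cauchy--Riemann equations flipped, and the hyperbolic annulus example discussed earlier in this section already illustrates why the simple-connectedness assumption suffices in this split setting even though the shape of the domain can look annular.
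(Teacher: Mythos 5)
Your proof is correct and matches the paper's argument: both define $v(z)=\int_{z_0}^{z} u_y\,dx + u_x\,dy$ and use split-harmonicity plus simple connectedness to justify path-independence, then read off the split Cauchy--Riemann equations. The only cosmetic difference is that you phrase the well-definedness step as closedness of the $1$-form plus the Poincar\'e lemma, whereas the paper invokes Green's theorem on closed curves; these are the same argument.
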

\begin{proof}
Let $v(z)=\int_{z_0}^{z} u_y dx+u_x dy$. We note that by Green's theorem, on any closed curve $C$ in the domain,                        
$$ \int_C u_y dx+ u_x dy = \int _ D (u_{xx}-u_{yy}) dx dy =0$$

Where D is the region bounded by $C$. Thus the map $v$ is well defined and it satisfies $$u_x=v_y$$ $$u_y=v_x.$$ Thus $v$ is a split-harmonic conjugate of $u$ up to a constant.

\end{proof}

We now state the general form of a split-harmonic map on a simply connected domain.

\begin{theorem}[Representation formula for split-harmonic functions]
Let $\Omega$ be a simply-connected domain and $F$: $\Omega \subset $ $\mathbb{C}^{'}$ $\rightarrow$ $\mathbb{C}^{'}$ be a split-harmonic map. Then $F$ can be written as $$F=h+\overline{g}$$ with $h$,$g$ being split-holomorphic maps. This representation formula is unique up to an additive constant.
\end{theorem}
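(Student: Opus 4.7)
The plan is to transcribe the familiar complex-analytic decomposition of $\mathbb{C}$-valued harmonic functions as $h + \overline{g}$ into the split-complex setting. The preceding theorem (existence of a split-harmonic conjugate on a simply connected domain) together with the algebra of the conjugation involution on $\mathbb{C}^{'}$ does all of the work.

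First I would split the target: write $F = F_1 + k^{'} F_2$ with $F_1, F_2 : \Omega \to \mathbb{R}$, both of which are real split-harmonic by definition. Applying the preceding theorem on the simply connected domain $\Omega$ yields real split-harmonic conjugates $\tilde{F}_1, \tilde{F}_2$ so that $H_j := F_j + k^{'} \tilde{F}_j$ satisfies the split Cauchy-Riemann equations, i.e.\ $H_j$ is split-holomorphic for $j=1,2$. Using $\overline{k^{'}} = -k^{'}$ we have $F_j = (H_j + \overline{H_j})/2$, and hence
\[
F \;=\; \frac{H_1 + \overline{H_1}}{2} + k^{'}\,\frac{H_2 + \overline{H_2}}{2}
\;=\; \frac{H_1 + k^{'} H_2}{2} + \frac{\overline{H_1 - k^{'} H_2}}{2}.
\]
Setting $h := (H_1 + k^{'} H_2)/2$ and $g := (H_1 - k^{'} H_2)/2$, both of which are split-holomorphic as $\mathbb{C}^{'}$-linear combinations of split-holomorphic maps (one checks directly that multiplication by $k^{'}$ preserves the split Cauchy-Riemann equations), gives the desired decomposition $F = h + \overline{g}$.

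For uniqueness, suppose $F = h_1 + \overline{g_1} = h_2 + \overline{g_2}$. Then $\phi := h_1 - h_2 = \overline{g_2 - g_1}$ is simultaneously split-holomorphic and the conjugate of a split-holomorphic map. Writing $\phi = \phi_1 + k^{'} \phi_2$, the split Cauchy-Riemann equations for $\phi$ read $\phi_{1,x} = \phi_{2,y}$ and $\phi_{1,y} = \phi_{2,x}$, while those for $\overline{\phi} = \phi_1 - k^{'} \phi_2$ read $\phi_{1,x} = -\phi_{2,y}$ and $\phi_{1,y} = -\phi_{2,x}$. Adding and subtracting forces all first partials of $\phi_1$ and $\phi_2$ to vanish, so $\phi$ is constant on the connected set $\Omega$. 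Thus $h$ and $g$ are pinned down up to an additive split-complex constant (absorbed into either summand, using $\overline{c}$ for the $g$-piece). I do not anticipate a genuine obstacle: once the identities $\overline{k^{'}} = -k^{'}$ and $\overline{A+B} = \overline{A}+\overline{B}$ are in hand and the split-harmonic conjugate theorem is available, the argument is a faithful translation of its $\mathbb{C}$-analogue.
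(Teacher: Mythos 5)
Your proof is correct, but the construction of $h$ and $g$ is genuinely different from the paper's. The paper notes that $F$ split-harmonic means $F_{z\overline z}=0$, so $F_z$ is split-holomorphic; it then defines $h(z)=\int_{z_0}^z F_z\,dz$ (a path integral, well-defined on the simply connected $\Omega$), and sets $g=\overline{F-h}$, checking that $g_{\overline z}=\overline{F_z-h_z}=0$. You instead split the target $F=F_1+k'F_2$ into real-valued split-harmonic components, invoke the preceding split-harmonic conjugate theorem to produce split-holomorphic $H_1,H_2$ with $F_j=(H_j+\overline{H_j})/2$, and then recombine algebraically using $\overline{k'}=-k'$ and closure of split-holomorphicity under multiplication by $k'$ (which you correctly verify via the split Cauchy--Riemann equations). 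Both routes ultimately rest on simple connectivity through a path integral: the paper integrates $F_z$ directly, while you integrate inside the conjugate-existence theorem. The paper's route is the straight transcription of Duren's argument for harmonic maps and gets $h,g$ in one stroke; yours is more modular (it reduces the $\mathbb{C}'$-valued statement to the real-valued conjugate theorem already in the paper) and arguably sits closer to what a first course would do. The uniqueness argument — $\phi=h_1-h_2=\overline{g_2-g_1}$ is both split-holomorphic and anti-split-holomorphic, hence the two sets of Cauchy--Riemann equations force all first partials to vanish — is the same in both, and your sign $\overline{g_2-g_1}$ is actually the correct one.
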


\begin{proof}

 $F$ being split-harmonic implies, $$\frac{\partial F}{\partial \overline{z} \partial z }=0.$$ In particular  $F_z$ is split-holomorphic. We fix a point $z_0$ in $\Omega$. Let $h(z)=\int _{z_0} ^{z} F_z dz.$ is a well-defined map which is split-holomorphic.We refer to  \cite{Rpr} page 16 for proof of well-defineness of $h$. Let $g= \overline{F-h}$. We note that $$g_{\overline{z}}= \frac{\partial \overline{(F-h)}}{\partial \overline{z}}=\overline{F_z-h_z}=0$$  in $\Omega$.

Thus $g$ is also split-holomorphic. We also have $F=h+\overline{g}$. To show uniqueness up to additive constant, suppose $$F=h_1+\overline{g_1}=h_2+\overline{g_2}$$ then $u=h_1-h_2=\overline{g_1-g_2}$ is both split-holomorphic and anti split-holomorphic(I.e, conjugate of a split-holomorphic map). Using Cauchy-Riemann equations in split-complex analysis, one can show that a map that is both split-holomorphic and anti split-holomorphic must be a constant.
\end{proof}
 The above proof is inspired by a similar result for harmonic maps in  \cite{Pd}, page 7.
\section{The singular Bj\"orling problem}
In this section, we give a new proof the singular Bj\"orling problem for timelike minimal surfaces.

Let $\mathbb{H}^1$ denotes the set $\{x+k^{'}y \in \mathbb{C}^{'}|x>0,x^2-y^2=1\}.$

We  recall that for split-exponential map, $e^{k'\theta}=\cosh{\theta}+k^{'}\sinh{\theta}$. Thus any split-complex number $x+k^{'}y$ with $x>0$ and $-x <y<x$, can be written as $z=\rho e^{k' \theta}$ for some real numbers  $\theta$ and $\rho$. Also any point in $\mathbb{H}^1$ can be written as $e^{k^{'}\theta}$ for some real number $\theta$. We refer to \cite{Fa}, page 11 for more details.

We start with a definition.

\begin{definition}
A map $\alpha: \mathbb{H}^1 \rightarrow \mathbb{C^{'}}$ is said to be analytic, if for any point $\eta$ in  $\mathbb{H}^1$, there is a neighborhood of it following the usual topology of $\mathbb{R}^2$, where it can be represented by a powerseries, $\alpha(z)=\Sigma_0 ^{\infty} c_n (z-\eta)^n$.

A curve $\gamma: \mathbb{H}^1$ $\rightarrow$ $\mathbb{L}^3$ is said to be analytic if each of its components is analytic.
\end{definition}

Given analytic $\gamma: \mathbb{H}^1 \rightarrow \mathbb{L}^3$ and $L: \mathbb{H}^1 \rightarrow \mathbb{L}^3$, we define  maps $g_1$ and $g_2$ on $\mathbb{H}^1$ as follows. Let $$g_1(e^{k^{'}\theta})=\left((L_1+k^{'}L_2)+k^{'}(\gamma_1+k^{'}\gamma_2)\right)e^{k^{'}\theta}$$  and  $$g_2(e^{k^{'}\theta})=\left((L_1-k^{'}L_2)+k^{'}(\gamma_1-k^{'}\gamma_2)\right)e^{k^{'}\theta}.$$

Now we state the main result of this section.
 
 \begin{theorem}
Suppose an analytic lightlike curve $\gamma: \mathbb{H}^1 \rightarrow \mathbb{L}^3,$  and an analytic lightlike vector field $L: \mathbb{H}^1 \rightarrow \mathbb{L}^3$ are given with the properties that,
\begin{itemize}
  \item $\langle \gamma^{'}, L \rangle =0$ 
  \item analytic extension  $g_1(z)$ of $g_1(e^{k^{'}\theta})$ and $g_2(z)$ of $g_2(e^{k^{'}\theta})$ satisfy $|g_1(z)|\not \equiv |g_2(z)|$.
\end{itemize}
Then there is a generalized timelike minimal surface $F= (h,\omega)$ defined on some hyperbolic annulus $A(r,R)=\{x+k^{'}y \in \mathbb{C^{'}}| x>0, 0<r<x^2-y^2<R\}$ , $r<1<R$  with singular set  atleast $\mathbb{H}^1$ such that,
$$F(e^{k^{'}\theta})=\gamma(e^{k^{'}\theta}),$$
$$\frac{\partial F}{\partial \rho}|_{e^{k^{'}\theta}}=L(e^{k^{'}\theta})$$

\end{theorem}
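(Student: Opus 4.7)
The plan is to invoke Theorem 2.1: the existence of a generalized timelike minimal surface $F=(h,\omega)$ reduces to constructing a split-harmonic $h:A(r,R)\to\mathbb{C}^{'}$ and a real split-harmonic $\omega:A(r,R)\to\mathbb{R}$ satisfying the compatibility $\omega_z^{\,2}=h_z\,\overline{h_{\bar z}}$ and the non-degeneracy $|h_z|\not\equiv|h_{\bar z}|$, with the prescribed Cauchy data on $\mathbb{H}^1$. Because a hyperbolic annulus is simply connected, Theorem 3.4 lets me write any candidate $h$ as $h=f+\overline{g}$ with $f,g$ split-holomorphic, and then $h_z=f'(z)$, $\overline{h_{\bar z}}=g'(z)$, so the compatibility becomes $\omega_z^{\,2}=f'(z)\,g'(z)$. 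As a preliminary, the analyticity of $\gamma$ and $L$ together with the parametrization $z=e^{k'\theta}$ makes $g_1(e^{k'\theta})$ and $g_2(e^{k'\theta})$ real-analytic functions of $\theta$, which extend to split-holomorphic maps $g_1(z),g_2(z)$ on some hyperbolic annulus $A(r,R)$ with $r<1<R$.

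To build $h$, I would pass to the null coordinates $s=x+y$, $t=x-y$, in which the split-harmonic equation reads $h_{st}=0$ and $\mathbb{H}^1$ becomes the curve $\{st=1\}$. The general split-harmonic solution is then $p(s)+q(t)$, and the two Cauchy conditions $h|_{\mathbb{H}^1}=\gamma_1+k'\gamma_2$ and $\partial_\rho h|_{\mathbb{H}^1}=L_1+k'L_2$ produce a $2\times 2$ linear system on $\{st=1\}$ that determines $sp'(s)$ and $tq'(t)$ as explicit algebraic combinations of the analytic extensions $g_1(z),g_2(z)$. Integrating $p',q'$ yields $p,q$; by Theorem 3.4 one then recovers split-holomorphic $f,g$ such that $h=f+\overline{g}$ has the prescribed first-order jet on $\mathbb{H}^1$.

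For $\omega$, I would compute $f'(z)g'(z)$ directly on $\mathbb{H}^1$. Using the lightlike conditions $\langle L,L\rangle=\langle\gamma',\gamma'\rangle=0$ and the orthogonality $\langle\gamma',L\rangle=0$, the expression reduces on $\mathbb{H}^1$ to a perfect square of the shape $\bigl((L_3+k'\gamma_3')/(2z)\bigr)^{2}$. By the identity theorem for split-analytic maps (Theorem 3.2), this identity between the two split-holomorphic extensions propagates from $\mathbb{H}^1$ to all of $A(r,R)$, so $f'g'$ admits a split-holomorphic square root on the annulus. Taking $\omega_z$ to be this square root, with the branch chosen to match the boundary data, and setting $\omega:=2\operatorname{Re}\int\omega_z\,dz$ with a real integration constant chosen so that $\omega|_{\mathbb{H}^1}=\gamma_3$, one obtains a real split-harmonic $\omega$; the same computation confirms $\partial_\rho\omega|_{\mathbb{H}^1}=L_3$.

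Finally, I would verify the hypotheses of Theorem 2.1: $F=(h,\omega)$ is split-harmonic by construction; $\omega_z^{\,2}=h_z\,\overline{h_{\bar z}}$ by the choice of $\omega_z$; and the hypothesis $|g_1(z)|\not\equiv|g_2(z)|$, via one more use of the identity theorem, translates into $|h_z|\not\equiv|h_{\bar z}|$. The inclusion of $\mathbb{H}^1$ in the singular set is automatic, since the conformal factor $-\tfrac{1}{2}(|h_z|-|h_{\bar z}|)^{2}$ identified in the proof of Theorem 2.1 vanishes on $\mathbb{H}^1$ in view of the lightlike character of $\gamma$ and $L$. The most delicate step I foresee is picking a coherent branch of the split-holomorphic square root of $f'g'$: the identity theorem reduces its global existence to a check on $\mathbb{H}^1$, but one must additionally ensure that the single chosen branch simultaneously realizes both boundary conditions on $\omega$, avoiding a spurious sign-flip that would instead give $\partial_\rho\omega=-L_3$.
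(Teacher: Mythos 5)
Your proposal reaches the same conclusion but via a genuinely different route for the third component $\omega$, and that is the main thing worth comparing.

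For $h$ both you and the paper solve the wave equation with first-order Cauchy data on $\mathbb{H}^1$ and verify split-harmonicity; you prefer null coordinates $s=x+y,\ t=x-y$ (d'Alembert form $h=p(s)+q(t)$), while the paper parametrizes $z=\rho e^{k'\theta}$, writes $h_z,\ h_{\bar z}$ in $(\rho,\theta)$ and recovers $h$ by a path integral that is well-defined by Stokes' theorem on the simply connected annulus. These are equivalent bookkeeping choices.

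The real divergence is $\omega$. The paper builds $\omega$ \emph{symmetrically to $h$}, by prescribing the Cauchy data $\omega_\theta=\gamma_3'$ and $\omega_\rho=L_3$ on $\mathbb{H}^1$, integrating exactly as for $h$, and only afterwards verifying that the conformality relation $\omega_z^2=h_z\,\overline{h_{\bar z}}$ holds on $\mathbb{H}^1$ (this is precisely where $\langle\gamma',\gamma'\rangle=\langle L,L\rangle=\langle\gamma',L\rangle=0$ enter) and hence on all of $A(r,R)$ by the identity theorem. You instead use Theorem~3.4 to write $h=f+\bar g$, define $\omega_z$ as a split-holomorphic square root of $f'g'$, and then must (a) confirm a branch can be chosen globally, and (b) separately check that the resulting $\omega$ reproduces the Cauchy data $\omega|_{\mathbb{H}^1}=\gamma_3$ and $\partial_\rho\omega|_{\mathbb{H}^1}=L_3$. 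Your construction makes the conformality relation automatic, which is a genuine simplification; the paper's makes the boundary data automatic, which is cleaner here because the branch issue you flag at the end never arises. In fact the branch issue is a non-issue once you observe, as you essentially do, that the explicit square root is (the analytic extension of) $\tfrac12(L_3+k'\gamma_3')\,e^{k'\theta}$, so there is a canonical choice with no monodromy, and any overall sign ambiguity is absorbed by replacing $\omega$ with $-\omega$. (Minor: your displayed candidate $\bigl((L_3+k'\gamma_3')/(2z)\bigr)^2$ should carry $z$, not $1/z$: on $\mathbb{H}^1$ one finds $h_z\overline{h_{\bar z}}=\tfrac14(L_3+k'\gamma_3')^2e^{2k'\theta}$, and also a single integration constant only fixes $\omega$ at one point of $\mathbb{H}^1$; the full equality $\omega|_{\mathbb{H}^1}=\gamma_3$ needs the tangential derivative $\omega_\theta=\gamma_3'$, which your branch choice does supply.) The final steps — invoking Theorem~2.1, using the $|g_1|\not\equiv|g_2|$ hypothesis via the identity theorem to get $|h_z|\not\equiv|h_{\bar z}|$, and noting the conformal factor vanishes on $\mathbb{H}^1$ — match the paper.
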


\begin{proof}

Since $\gamma$ and $L$ are analytic on $\mathbb{H}^1$, there is a hyperbolic annulus $A(r, R)$ containing  $\mathbb{H}^1$, where analytic extensions of both $\gamma$ and $L$ exists. We construct a harmonic map $h$ on $A(r, R)$ with,
$$h_\theta=\gamma_1+k^{'}\gamma_2,$$
$$h_\rho=L_1+k^{'}L_2.$$

Using, $$h_z=\frac{1}{2}(h_\rho+k^{'}h_\theta)e^{k^{'}\theta}$$ and
$${h}_{\overline{z}}=\frac{1}{2}(h_\rho-k^{'}h_\theta)e^{-k^{'}\theta},$$

We get $$h_z=\frac{1}{2}(L_1+k^{'}L_2+k^{'}(\gamma_1+k^{'}\gamma_2))e^{k^{'}\theta},$$ 
$${h}_{\overline{z}}=\frac{1}{2}(L_1+k^{'}L_2-k^{'}(\gamma_1+k^{'}\gamma_2))e^{-k^{'}\theta}.$$

Using $dh=h_z dz+ {h}_{\overline{z}} d \overline{z}$ from \cite{Fa}, We fix a point $z_0 \in \mathbb{H}^{1}$ and define,
$$h(z)= \int _{z_0} ^{z} dh= \int_{z_0}^{z} h_z dz+ {h}_{\overline{z}} d \overline{z}. $$ I.e, $$ h(z)  \int_{z_0}^{z} (\frac{1}{2}(L_1+k^{'}L_2+k^{'}(\gamma_1+k^{'}\gamma_2))e^{k^{'}\theta}) dz+ (\frac{1}{2}(L_1+k^{'}L_2-k^{'}(\gamma_1+k^{'}\gamma_2))e^{-k^{'}\theta}) d \overline{z}.$$
Where this integral is taken along any path in A(r, R) joining $z_0$ to $z$. We refer to \cite{Fa}, page 14 for similar results. By Stoke's theorem, this map is well defined and satisfies $$h_\theta=\gamma_1+k^{'}\gamma_2,$$
$$h_\rho=L_1+k^{'}L_2.$$

Since $h_z=\frac{1}{2}(L_1+k^{'}L_2+k^{'}(\gamma_1+k^{'}\gamma_2))e^{k^{'}\theta}$ is split-analytic, $h_{z \overline{z}}=0$ and $h$ is split-harmonic. Proof of existence of $\omega$ with $\omega_\theta=\gamma^{'}_3$ and $\omega_\rho=L_3$ is similar.

Now to show this $(h,\omega)$ satisfies $h_z \overline{h_{\overline{z}}}-\omega_z^{2}$ we note by computations that,

$$h_z \overline{h_{\overline{z}}}(e^{k^{'}\theta})=\frac{L_3^{2}+\gamma_3^{'^2}+2k^{'}L_3\gamma_3^{'} e^{2k^{'}\theta}}{4}=\omega_z^{2}(e^{k^{'}\theta}).$$

Thus we note that the split-holomorphic map $h_z \overline{h_{\overline{z}}}-\omega_z^{2}$ is zero on $\mathbb{H}^1$. By using theorem 3.2(Identity theorem) we conclude that it is zero on the entire hyperbolic annulus $A(r, R).$

To show $|h_z|$ not identically same as $|h_{\overline{z}}|$, we note that the maps $g_1(e^{k^{'}\theta})$, $g_2(e^{k^{'}\theta})$ agree with the maps $h_z$ ,$\overline{h_{\overline{z}}}$ on $\mathbb{H}^1$. One can show this by calculations. Now by theorem 3.2(identity theorem), thus the maps $g_1(z),g_2(z)$ are same as $h_z$ ,$\overline{h_{\overline{z}}}$. Now the assumption $|g_1(z)|$ is not identically the same as $|g_2(z)|$ ensures the desired result.

To show singular set contains at least $\mathbb{H}^{1}$, one can compute and prove $(|h_z|-|h_{\overline{z}}|)^2(e^{k^{'}\theta})=0$. Here $|h_z|^2+|h_{\overline{z}}|^2(e^{k^{'}\theta})=\frac{1}{2}(\gamma_3^{{'}_{2}}-L_3^{2})=2|h_z| |h_{\overline{z}}|(e^{k^{'}\theta})=2|\omega_z|^2(e^{k^{'}\theta}).$
\end{proof}

\section{Interpolating a given split-Fourier curve to a  point }
 In this section, we study the problem of interpolating a given spacelike or timelike split-Fourier curve to a point $p$  by a timelike minimal surface. This is similar to a result in \cite{Rpr}.
We first prove that for any analytic split-Fourier curve, split-Fourier coefficients are unique.
\begin{theorem}
For any analytic split-fourier curve $\gamma(\theta)=(\gamma_1+k^{'}\gamma_2,\gamma_3)$   $=(\Sigma_{-\infty}^{\infty} c_n e^{k^{'}n\theta}, \Sigma_{-\infty}^{\infty} d_n e^{k^{'}n\theta})$ , the coefficients $c_n$ and $d_n$ are unique.
\end{theorem}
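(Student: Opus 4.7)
The plan is to reduce uniqueness of the Fourier coefficients to showing that a single split-polynomial vanishing on $\mathbb{H}^1$ must have all coefficients zero, then to invoke the identity theorem (Theorem 3.2) and finish with a null-basis decomposition of $\mathbb{C}'$.

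First I would suppose that two expansions $\sum_{n=-N}^{N} c_n e^{k'n\theta}$ and $\sum_{n=-N}^{N} c_n' e^{k'n\theta}$ (padded with zeros so the index ranges agree) represent the same function $\gamma_1 + k'\gamma_2$ on $\mathbb{H}^1$. Writing $a_n = c_n - c_n'$, the task becomes showing that $\sum_{n=-N}^{N} a_n e^{k'n\theta} \equiv 0$ on $\mathbb{H}^1$ forces every $a_n = 0$. Setting $z = e^{k'\theta}$, noting that $e^{-k'\theta} = z^{-1}$ on $\mathbb{H}^1$, and multiplying through by $z^N$, this is equivalent to showing that the split-polynomial $P(z) = \sum_{m=0}^{2N} a_{m-N} z^m$ vanishes on $\mathbb{H}^1$ only if all its coefficients vanish. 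Because $P$ is split-holomorphic and analytic on $\mathbb{C}'$, and because every point of $\mathbb{H}^1$ is an accumulation point in the usual $\mathbb{R}^2$ topology, Theorem 3.2 applied on any simply connected open domain containing $\mathbb{H}^1$ (for example a hyperbolic annulus $A(r,R)$ with $r<1<R$) yields $P \equiv 0$ there.

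The step I expect to be the main obstacle is passing from ``$P$ is identically zero on an open subset of $\mathbb{C}'$'' to ``every coefficient of $P$ is zero.'' Zero divisors in $\mathbb{C}'$ obstruct any direct linear-algebra argument, so the cleanest route is via the null basis $e_{\pm} = (1 \pm k')/2$: write $z = (x+y)e_{+} + (x-y)e_{-}$ and decompose each coefficient as $a_{m-N} = a_{m-N}^{+} e_{+} + a_{m-N}^{-} e_{-}$. Using the idempotent relations $e_{+} e_{-} = 0$ and $e_{\pm}^{2} = e_{\pm}$, the identity $P(z)=0$ splits into two independent real polynomial identities $\sum_{m=0}^{2N} a_{m-N}^{+} (x+y)^m = 0$ and $\sum_{m=0}^{2N} a_{m-N}^{-} (x-y)^m = 0$ on an open subset of $\mathbb{R}^2$, viewed as polynomials in the independent coordinates $x+y$ and $x-y$. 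Standard uniqueness of real polynomial coefficients then forces every $a_{m-N}^{\pm} = 0$, hence every $a_n = 0$, giving uniqueness of the $c_n$.

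Finally, the identical three-step argument applied to the scalar split-Fourier series $\sum_n d_n e^{k'n\theta}$ representing $\gamma_3$ yields uniqueness of the $d_n$; the real-valuedness of $\gamma_3$ plays no role in the argument and is preserved automatically.
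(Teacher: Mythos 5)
Your proof is correct, but it takes a genuinely different route from the paper's. The paper splits $\sum c_n e^{k'n\theta}$ into its real and split-imaginary parts $\sum a_n \cosh n\theta$ and $\sum b_n \sinh n\theta$, analytically extends each in a classical complex variable, evaluates on the imaginary axis $\theta = ib$ so that $\cosh n\theta \mapsto \cos nb$, and then cites uniqueness of ordinary Fourier coefficients. You instead stay entirely within split-complex algebra: you reduce the claim to a split-polynomial $P(z)$ vanishing on $\mathbb{H}^1$, invoke Theorem~3.2 to propagate $P \equiv 0$ to an open hyperbolic annulus, and then use the null idempotents $e_\pm = (1\pm k')/2$ to break $P \equiv 0$ into two independent one-variable real polynomial identities in $x+y$ and $x-y$, whose coefficients are trivially unique. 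Your version lets the algebraic isomorphism $\mathbb{C}' \cong \mathbb{R}\oplus\mathbb{R}$ do the real work, remains internal to the paper's split-complex toolkit, and only needs uniqueness of real polynomial coefficients at the end rather than classical Fourier theory. One small economy worth noting: the appeal to the identity theorem is actually dispensable here, since on $\mathbb{H}^1$ the null coordinates are $x+y = e^{\theta}$ and $x-y = e^{-\theta}$, so decomposing $P|_{\mathbb{H}^1}\equiv 0$ directly gives $Q_+(u)=0$ for all $u>0$ and $Q_-(v)=0$ for all $v>0$, and a real polynomial vanishing on a half-line already has all coefficients zero.
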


\begin{proof}
Using induction, we can prove that
$$e^{k^{'}n\theta}=\cosh n \theta+ k^{'} \sinh n \theta.$$

Thus to show the series $\Sigma_{-\infty}^{\infty} c_n e^{k^{'}n \theta}$ has unique coefficients, it is enough to show its split-real and split-imaginary parts $\Sigma_{0}^{\infty} a_n \cosh n \theta$ and $\Sigma_{0}^{\infty} b_n\sinh n \theta$  with $a_n$ and $b_n \in \mathbb{R}$ have unique coefficients. We show that for any function $f$ with a converging series expansion $f(\theta)=\Sigma_{0}^{\infty} a_n \cosh n \theta$, the coefficients $a_n$ are unique.Proof for $\Sigma_{0}^{\infty} b_n\sinh n \theta$ is similar.

 $f$ is real analytic and it has an analytic extension $f(z)=\Sigma_{0}^{\infty} a_n \cosh n z$  within a radius of convergence $R$. Let $i$ denote the complex number with $i^2=-1.$ The point $ib$ is inside this disc for any b with $0\leq b <R$ and we have  $f(ib)=\Sigma_{0}^{\infty} a_n \cosh n ib =\Sigma_{0}^{\infty} a_n \cos n b$. Let $g(b)=f(ib)=\Sigma_{0}^{\infty} a_n \cos nb .$ Then this is a Fourier expansion of $g$ in $(-R,R)$. By multiplying $g$ with a constant if necessary, we assume that $R>\pi.$  Thus the coefficients $a_n$ are Fourier coefficients of $g$ in $[-\pi,\pi]$ and thus they are unique.

\end{proof}

We solve the problem of interpolating a given spacelike or timelike analytic split-Fourier curve to the point $p=(0,0,0)$ using a timelike minimal surface. This is in parallel to a similar result in \cite{Rpr}.

\begin{theorem}
A given  spacelike or timelike  analytic split-Fourier curve  $\gamma(\theta)=(\Sigma_{-\infty}^{\infty} c_n e^{k^{'}n\theta}, \Sigma_{-\infty}^{\infty} d_n e^{k^{'}n\theta})$ can be interpolated as $X(re^{k^{'}\theta})$ to the point $p=(0,0,0)$ with $X(e^{k^{'}\theta})=p$  using a timelike minimal surface $X$, if there is an $r>1$ such that $c_n$ and $d_n$ satisfies,
$$\Sigma _{n=-\infty}^{\infty} 4n(n-m) \frac{c_n\overline{c}_{n-m} r^{2n-m}}{(r^n-1)(r^{n-m}-1)} -\Sigma_{\{(i,j)|i+j=m\}} \hspace{0.1cm} 4ij \frac{d_id_j r^k}{(r^i-1)(r^j-1)}$$ whenever $m\neq 0, n \in \mathbb{Z}$ and

$$\Sigma _{n=-\infty}^{\infty} 4n^2 \left(\frac{\overline{c_n}c_n r^{2n}}{(r^n-1)^2}+\frac{d_n d_{-n}}{(r^n-1)(r^{-n}-1)}\right)=0$$

\end{theorem}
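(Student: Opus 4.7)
The plan is to construct the surface $X=(h,\omega)$ as an explicit split-harmonic map on the hyperbolic annulus $A=\{x+k'y \mid x>0,\ 1<x^2-y^2<r^2\}$ via a split-Fourier ansatz that realizes both boundary conditions automatically, and then to translate the intrinsic minimality condition $\omega_z^{2}=h_z\overline{h_{\bar z}}$ from Theorem 2.1 into algebraic constraints on the split-Fourier coefficients $c_n,d_n$.

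First I would fix the ansatz
\[
h(z)=\sum_{n}A_n\bigl(z^{n}-\bar z^{-n}\bigr),\qquad \omega(z)=\sum_{n}B_n\bigl(z^{n}-\bar z^{-n}\bigr),
\]
where the sums are finite because $\gamma$ is a split-Fourier curve. Each building block $z^{n}-\bar z^{-n}$ is split-harmonic (a sum of a split-holomorphic and an anti-split-holomorphic term) and vanishes identically on $\mathbb{H}^{1}$, because $\bar z^{-n}=z^{n}$ whenever $z=e^{k'\theta}$. Thus the first boundary condition $X(e^{k'\theta})=(0,0,0)$ is built into the ansatz. Evaluating at $z=re^{k'\theta}$ produces the split-Fourier series $\sum A_n(r^{n}-r^{-n})e^{k'n\theta}$, and the uniqueness of split-Fourier coefficients (Theorem 5.1) forces $A_n$ and $B_n$ to be explicit multiples of $c_n$ and $d_n$, each with a denominator built from $r^{n}-r^{-n}$.

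Next, the split-harmonicity of $h$ and $\omega$ is automatic from the ansatz. By Theorem 2.1, $X=(h,\omega)$ is a generalized timelike minimal surface iff $\omega_z^{2}=h_z\overline{h_{\bar z}}$ and $|h_z|\not\equiv|h_{\bar z}|$. Differentiating term-by-term gives
\[
h_z=\sum_n nA_nz^{n-1},\qquad \overline{h_{\bar z}}=\sum_n n\overline{A_n}z^{-n-1},\qquad \omega_z=\sum_n nB_nz^{n-1},
\]
so both sides of the required identity are split-Laurent polynomials in $z$. Matching the coefficient of $z^{m-2}$ on the two sides yields, for every integer $m$,
\[
\sum_{n}n(n-m)\,A_n\overline{A_{n-m}}=\sum_{i+j=m}ij\,B_iB_j.
\]
Substituting the formulas for $A_n,B_n$ in terms of $c_n,d_n$ and clearing denominators produces precisely the two families of identities in the statement, the $m=0$ case giving the diagonal condition and the $m\neq 0$ cases giving the off-diagonal ones. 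Finally, the non-degeneracy $|h_z|\not\equiv|h_{\bar z}|$ follows from the hypothesis that $\gamma$ is spacelike or timelike (and not lightlike), in analogy to the role played by $|g_1|\not\equiv|g_2|$ in Theorem 4.1.

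The main obstacle I foresee is the index bookkeeping in the double sums for $h_z\overline{h_{\bar z}}$ and $\omega_z^{2}$: one must track split-conjugation (with $\overline{k'}=-k'$), the shift coming from $z^{-n-1}$, and the degenerate cases $n=0$ or $n=m$ where the normalizing denominators would vanish. Once this coefficient comparison is set up, the identity theorem (Theorem 3.2) guarantees that coefficient-wise equality of the two split-holomorphic Laurent series on $A$ is equivalent to the pointwise identity $\omega_z^{2}=h_z\overline{h_{\bar z}}$, and the stated constraints on $c_n,d_n$ are exactly what this equivalence amounts to.
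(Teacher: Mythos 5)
Your proof is correct and follows essentially the same route as the paper: the same split-harmonic ansatz (you just fold the relation $a_n=-b_n$ into the building block $z^n-\bar z^{-n}$ from the start, where the paper imposes it afterward), the same determination of the coefficients from the two boundary conditions, and the same reduction of $\omega_z^2=h_z\overline{h_{\bar z}}$ to a family of quadratic constraints. The only real difference is cosmetic: you match Laurent coefficients of $z^{m-2}$ directly, whereas the paper restricts to $\mathbb{H}^1$, compares split-Fourier modes there, and then invokes the identity theorem to propagate the vanishing back to the whole annulus; your version makes that final appeal to Theorem 3.2 redundant.
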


\begin{proof}

Since $\gamma$ is analytic, there is a hyperbolic annulus $A(r, R)$ containing $\mathbb{H}^1$ where its analytic extension exists. We construct a timelike minimal surface $X$ with $X(e^{k^{'}\theta})=p$ and $X(re^{k^{'}\theta})=\gamma(\theta)$ on this hyperbolic annulus, $A(r,R)$.

We consider split-harmonic maps of the form $$h(z)=\Sigma_{-\infty}^{\infty} a_n z^n +b_n \frac{1}{\overline{z}^n}.$$

Using $h(e^{k^{'}\theta})=(0,0)$ we get $a_n=-b_n.$ similary using $h(re^{k^{'}\theta})=\Sigma_0^{\infty} c_n e^{k^{'}n\theta}$, one can compute $a_n$. From here we get, $a_n=\frac{c_n r^n}{(r^{2n}-1)}.$ 

Assuming $$\omega(z)=\Sigma_{-\infty}^{\infty} f_n z^n +g_n \frac{1}{\overline{z}^n}$$ and doing similar computations, we conclude that $f_n=-g_n$ and $f_n=\frac{d_n r^n}{(r^{2n}-1)}.$

Thus the surface $X(z)=(h(z),\omega(z))$ passes through both $p$ and the given curve $\gamma$. To show this is timelike minimal, we compute and get $$h_z \overline{h_{\overline{z}}}-\omega_z^2(e^{k^{'}\theta})=\frac{1}{4}\left(\Sigma_{-\infty}^{\infty}2n a_n e^{k^{'}n\theta}\Sigma_{-\infty}^{\infty}2n \overline{a}_n e^{-k^{'}n\theta}-\left(\Sigma_{-\infty}^{\infty}2nf_n e^{k^{'}n\theta}\right)^2\right).$$
 
From here, comparing coefficients of $e^{k^{'}(n-m)\theta}$ when $n \neq m$ and when $n=m$, one gets the conditions given in the theorem. Thus the above quantity vanishing is equivalent to the conditions given in the result. This being zero implies $h_z \overline{h_{\overline{z}}}-\omega_z^2=0$ on the entire hyperbolic annulus by identity theorem of split-analytic maps. Also $|h_z|$ is not identically same as $|h_{\overline{z}}|$ as $X(re^{k^{'}\theta})$ is a spacelike or timelike curve. Thus $X$ is a timelike minimal surface interpolating both $\gamma$ and $p.$ 
\end{proof}
\section{Interpolating a split-Fourier curve to another split-Fourier curve}
We solve the problem of interpolating a given analytic spacelike or timelike split-Fourier curve to another specified analytic split-Fourier curve.

\begin{theorem}
Given a  spacelike or timelike analytic  split-Fourier curve  $\gamma(\theta)=(\Sigma_{-\infty}^{\infty} c_n e^{k^{'}n\theta}, \Sigma_{-\infty}^{\infty} d_n e^{k^{'}n\theta})$ and a analytic split-Fourier curve $\alpha(\theta)=(\Sigma_{-\infty}^{\infty} l_n e^{k^{'}n\theta}, \Sigma_{-\infty}^{\infty} m_n e^{k^{'}n\theta})$, for $r>1$ let $$a_n(r)=\frac{r^{n}c_n-l_n}{r^{2n}-1}$$ and $$f_n(r)=\frac{r^{n}d_n-m_n}{r^{2n}-1}.$$$\gamma$  can be interpolated as $X(re^{k^{'}\theta})$ to $\alpha$ as $X(e^{k^{'}\theta})$ by a timelike minimal surface $X$  if there is an $0<r<1$ such that $a_n(r)$ and $f_n(r)$ satisfies,

$$\Sigma_{n=-\infty}^{\infty} 4n(n-m)a_n(r)(\overline{a_{n-m}(r)}-\overline{l}_{n-m})-\Sigma_{\{(i,j)| i+j=m\}} \hspace{0.1cm} 4ij f_i(r) f_j(r)=0,$$

for any $m \neq 0$ \hspace{0.01cm} and,
$$\Sigma_{n=-\infty}^{\infty}4n^2\left(a_n(r)(\overline{a_n(r)}-\overline{l}_n)+f_n(r)f_{-n}(r)\right)=0.$$
\end{theorem}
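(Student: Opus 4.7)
The plan is to mirror the strategy of Theorem 5.2, now with nontrivial boundary data on both hyperbolas. On a hyperbolic annulus $A(r,R)$ containing both $\mathbb{H}^1$ and $\{re^{k^{'}\theta}\}$, on which analytic extensions of $\gamma$ and $\alpha$ are available by hypothesis, I would take the split-harmonic ansatz
$$h(z)=\sum_{n=-\infty}^{\infty}\left(a_n z^n + \frac{b_n}{\overline{z}^n}\right),\qquad \omega(z)=\sum_{n=-\infty}^{\infty}\left(f_n z^n + \frac{g_n}{\overline{z}^n}\right),$$
and build the candidate surface $X=(h,\omega)$ via Theorem 3.5. Imposing $h(e^{k^{'}\theta})=\sum l_n e^{k^{'}n\theta}$ and $h(re^{k^{'}\theta})=\sum c_n e^{k^{'}n\theta}$ (and analogously for $\omega$) yields the linear systems $a_n+b_n=l_n$, $a_n r^n+b_n r^{-n}=c_n$, whose unique solutions are precisely $a_n(r)$, $b_n=l_n-a_n(r)$, and similarly $f_n(r)$, $g_n=m_n-f_n(r)$. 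This fixes the ansatz; the only subtlety here is convergence of the series on $A(r,R)$, which follows from the decay of Fourier coefficients of the analytic curves $\gamma$ and $\alpha$.

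The substantive step is to enforce the two remaining conditions of Theorem 2.1: the conformality identity $\omega_z^2=h_z\overline{h_{\overline{z}}}$ and the non-degeneracy $|h_z|\not\equiv|h_{\overline{z}}|$. Since $E(z):=h_z\overline{h_{\overline{z}}}-\omega_z^2$ is split-holomorphic on $A(r,R)$, the identity theorem (Theorem 3.2) reduces its vanishing to a check on $\mathbb{H}^1$. On $z=e^{k^{'}\theta}$ one has
$$h_z=\sum n\, a_n(r)\, e^{k^{'}(n-1)\theta},\qquad \overline{h_{\overline{z}}}=\sum n\bigl(\overline{a_n(r)}-\overline{l_n}\bigr)\,e^{-k^{'}(n+1)\theta},$$
and analogous formulas for $\omega_z$. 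Expanding $E|_{\mathbb{H}^1}$ as a split-Fourier series and collecting the coefficient of $e^{k^{'}m\theta}$ produces, for $m\neq 0$, the stated Cauchy-product identity, while the $m=0$ (diagonal) coefficient yields the second condition. The non-degeneracy $|h_z|\not\equiv|h_{\overline{z}}|$ follows from the hypothesis that $\gamma$ is spacelike or timelike, so its tangent is non-null on $\{re^{k^{'}\theta}\}$; the argument at the end of the proof of Theorem 5.2 then transfers verbatim.

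The main obstacle is the Cauchy-product bookkeeping in the second step: because $b_n=l_n-a_n(r)$ rather than $b_n=-a_n(r)$ as in the interpolate-to-a-point case, the Fourier coefficients of $E|_{\mathbb{H}^1}$ acquire the asymmetric factor $\overline{a_{n-m}(r)}-\overline{l_{n-m}}$, which must be tracked carefully through the convolution. Once this indexing is organized correctly, the remainder of the argument is a direct transcription of the proof of Theorem 5.2.
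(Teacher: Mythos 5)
Your proposal follows essentially the same route as the paper's own proof: same split-harmonic ansatz, same linear system fixing $a_n,b_n,f_n,g_n$ from the two boundary hyperbolas, same reduction via the identity theorem to matching split-Fourier coefficients of $h_z\overline{h_{\overline{z}}}-\omega_z^2$ on $\mathbb{H}^1$, and the same appeal to $\gamma$ being non-null for $|h_z|\not\equiv|h_{\overline{z}}|$. The only difference is that you spell out the $2\times2$ linear system and remark on convergence, which the paper leaves implicit.
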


\begin{proof}
The proof is similar to that of theorem 5.2. We consider split-harmonic maps of the form,
$$h(z)=\Sigma_{-\infty}^{\infty} a_n z^n +b_n \frac{1}{\overline{z}^n}$$ and
$$\omega(z)=\Sigma_{-\infty}^{\infty} f_n z^n +g_n \frac{1}{\overline{z}^n}.$$
$X(e^{k^{\theta}})=\alpha$ imply $b_n=l_n-a_n$ and $g_n=m_n-f_n$. Similarly $X(re^{k^{'}\theta})=\gamma$ implies $$a_n=\frac{r^{n}c_n-l_n}{r^{2n}-1}$$ and $$f_n=\frac{r^{n}d_n-m_n}{r^{2n}-1}$$.

Thus with these choices of $a_n, b_n, f_n, g_n$ the surface $X$ passes through both $\alpha$ and $\gamma$. To show this is timelike minimal we compute and get,
$$h_z \overline{h_{\overline{z}}}-\omega_z^2(e^{k^{'}\theta})=\frac{1}{4}\left(\Sigma_{-\infty}^{\infty}(2n a_ne^{k^{'}n\theta})\Sigma_{-\infty}^{\infty}( 2n(\overline{a}_n-\overline{l}_n) e^{-k^{'}n\theta}-\left(\Sigma_{-\infty}^{\infty}2nf_n e^{k^{'}n\theta}\right)^2\right).$$
 
From here, comparing coefficients of $e^{k^{'}(n-m)\theta}$ when $n \neq m$ and when $n=m$, one gets the conditions given in the theorem. The above quantity vanishing is equivalent to the conditions given in the result. This being zero implies $h_z \overline{h_{\overline{z}}}-\omega_z^2=0$ on the entire hyperbolic annulus by identity theorem of split-analytic maps. Also $|h_z|$ is not identically same as $|h_{\overline{z}}|$ as $X(re^{k^{'}\theta})$ is a spacelike or timelike curve. Thus $X$ is a timelike minimal surface interpolating both $\gamma$ and $\alpha.$ 
\end{proof}
\section*{Acknowledgement}
I would like to thank my MS thesis supervisor Prof. Rukmini Dey(ICTS Bangalore) for suggesting the problem to me and for all the interesting and helpful discussions we had during the project. I want to express my gratitude to ICTS Bangalore for their hospitality.  This work was done as a part of my master's thesis in IISER Pune.

\end{document}